\newtheorem{theorem}{Theorem}
\newtheorem{lemma}[theorem]{Lemma}
\newtheorem{algorithm}[theorem]{Algorithm}
\newtheorem{test}[theorem]{Test}
\newtheorem{conjecture}{Conjecture}
\newcommand{\ppirr}{{$\mathbb{P}^2$-ir\-re\-du\-ci\-ble}}
\newcommand{\R}{\mathbb{R}}
\newcommand{\regina}{\emph{Regina}}
\newcommand{\tri}{\mathcal{T}}
\title{Detecting genus in vertex links for the \\
    fast enumeration of 3-manifold triangulations}
\author{Benjamin A.~Burton}
\date{January 16, 2011}
\begin{document}

\maketitle

\begin{abstract}
Enumerating all 3-manifold triangulations of a given size is a
difficult but increasingly important problem in computational topology.
A key difficulty for enumeration algorithms is that most combinatorial
triangulations must be discarded because they do not represent
topological 3-manifolds.
In this paper we show how to preempt bad triangulations
by detecting genus in partially-constructed vertex links,
allowing us to prune the enumeration tree substantially.

The key idea is to manipulate the boundary edges surrounding partial vertex
links using expected logarithmic time operations.
Practical testing shows the
resulting enumeration algorithm to be significantly faster,
with up to $249 \times$ speed-ups even for small problems where
comparisons are feasible.
We also discuss parallelisation, and
describe new data sets that have been obtained using high-performance
computing facilities.

\medskip
\noindent \textbf{ACM classes}\quad
G.2.1; G.4; I.1.2

\medskip
\noindent \textbf{Keywords}\quad
Computational topology, 3-manifolds, triangulations,
census algorithm, combinatorial enumeration
\end{abstract}

%
%

\section{Introduction} \label{s-intro}

In computational geometry and topology,
triangulations are natural and ubiquitous data structures for
representing topological spaces.
Here we focus on triangulations in 3-manifold topology,
an important branch of topology in which many key problems
are theoretically decidable but extremely difficult for practical
computation \cite{agol02-knotgenus,burton10-complexity}.

A \emph{census} of 3-manifold triangulations is a list of
all triangulations that satisfy some given set of properties.
A typical census fixes the number of tetrahedra (the \emph{size}
of the triangulation), and enumerates all triangulations up to
\emph{isomorphism} (a relabelling of the tetrahedra and their vertices).

Censuses of this type first appeared in the late 1980s
\cite{hildebrand89-cuspedcensusold,matveev88-hamiltonian}.\footnote{%
    Some authors, following Matveev \cite{matveev88-hamiltonian},
    work in the setting of \emph{special spines}
    which are dual to triangulations.}
These were censuses of \emph{minimal triangulations}, which
represent a given 3-manifold using the fewest possible
tetrahedra.
Such censuses have brought about
new insights into the combinatorics of minimal triangulations
\cite{burton07-nor8,martelli04-families,matveev98-or6}
and the complexities of 3-manifolds
\cite{martelli02-decomp,matveev03-algms},
and have proven useful for computation and experimentation
\cite{budney08-emb11,burton09-convert}.

A more recent development has been censuses of \emph{all} possible
3-manifold triangulations of a given size, including
non-minimal triangulations \cite{burton10-complexity}.
These have yielded surprising experimental insights into algorithmic
complexity problems and random 3-manifold triangulations
\cite{burton10-complexity,burton10-pachner}, both topics which remain
extremely difficult to handle theoretically.

The limits of censuses in the literature are fairly small.
For closed {\ppirr} 3-manifolds,
all minimal triangulations have been enumerated for only
$\leq 10$ tetrahedra \cite{burton07-nor10};
for the orientable case only, the list of manifolds (but not
triangulations) is known for $\leq 12$ tetrahedra \cite{matveev05-or12}.
A full list of all closed 3-manifold triangulations (non-minimal included)
is known for just $\leq 9$ tetrahedra \cite{burton10-complexity}.
These small limits are unavoidable because censuses grow
exponentially---and sometimes super-exponentially
\cite{burton10-pachner}---in size.

Nevertheless, in theory it should be possible to extend these results
substantially.  Census algorithms are typically based on a recursive
search through all possible \emph{combinatorial triangulations}---that
is, methods of gluing together faces of tetrahedra
in pairs.  However, as the number of tetrahedra grows
large, almost all combinatorial triangulations are \emph{not} 3-manifold
triangulations \cite{dunfield06-random-covers}.  The problem is that
\emph{vertex links}---boundaries of small neighbourhoods of the
vertices of a triangulation---are generally not spheres or discs as they
should be, but instead higher-genus surfaces.

To illustrate: for $n=9$ tetrahedra, if we simply glue together all
$4n$ tetrahedron faces in pairs, a very rough estimate (described in
the appendix) gives at least $6.44 \times 10^{12}$
connected \emph{combinatorial} triangulations up to isomorphism.
However, just 139\,103\,032 are \emph{3-manifold}
triangulations \cite{burton10-complexity}, and a mere 3\,338 are
minimal triangulations of closed {\ppirr} 3-manifolds \cite{burton07-nor10}.

It is clear then that large branches of the combinatorial
search tree can be avoided,
if one could only identify \emph{which} branches these are.
The current challenge for enumeration algorithms is to find
new and easily-testable conditions under which such branches
can be pruned.

For censuses of minimal triangulations, several such conditions
are known: examples include the absence of low-degree edges
\cite{burton04-facegraphs,hildebrand89-cuspedcensusold,matveev98-or6},
or of ``bad subgraphs'' in the underlying
4-valent face pairing graph \cite{burton07-nor10,martelli01-or9}.
Nevertheless, seeing how few minimal triangulations are found in practice,
there are likely many more conditions yet to be found.

It is critical that such conditions can be tested quickly, since these tests
are run on a continual basis as the search progresses and backtracks.
The paper \cite{burton07-nor10} introduces a modified union-find framework
through which several minimality tests can be performed in $O(\log n)$ time.
Importantly, this framework can also be used with censuses of
\emph{all} 3-manifold triangulations (non-minimal included), where it is used
to test that:
(i)~partially-constructed vertex links are orientable,
and (ii)~fully-constructed edges are not identified with themselves in
reverse.
Although tests (i) and (ii) are powerful when enumerating
\emph{non-orientable} triangulations, they do not help for
\emph{orientable} triangulations because they
are already enforced by the overall enumeration algorithm.

The main contributions of this paper are:
\begin{itemize}
    \item We add the following condition to the suite of tests
    used during enumeration:
    \emph{all partially-constructed vertex links must be punctured spheres}
    (not punctured higher-genus surfaces).

    Although this condition is straightforward, it has traditionally
    required $O(n)$ operations to test, making it
    impractical for frequent use in the enumeration algorithm.
    In Section~\ref{s-links} we show how to test this condition
    \emph{incrementally},
    using only expected logarithmic-time operations at each stage of the
    combinatorial search.

    This condition is extremely powerful
    in both the orientable and non-orientable cases,
    and our new incremental test makes it practical for real use.
    Performance testing in Section~\ref{s-perf} shows speed-ups of up to
    $249\times$ even for small enumeration problems where
    experimental comparisons are feasible ($n \leq 7$).

    \item In Section~\ref{s-data} we use this
    to obtain new census data, including
    all closed 3-manifold triangulations of size
    $n \leq 10$ (non-minimal included, improving the previous limit of
    $n \leq 9$), and
    all minimal triangulations of closed {\ppirr}
    3-manifolds of size $n \leq 11$ (improving the previous limit of
    $n \leq 10$).
    High-performance computing and distributed algorithms play a key
    role, as outlined in Section~\ref{s-par}.
    All censuses cover both orientable and non-orientable triangulations.
\end{itemize}

This new census data is already proving useful in ongoing projects,
such as studying the structure of minimal triangulations, and mapping out
average-case and generic complexities for difficult topological
decision problems.

It should be noted that avoiding isomorphisms---often a significant
difficulty in combinatorial enumeration---is not a problem here.
See the full version of this paper for details.

Looking forward, the techniques of this paper can also be applied to the
enumeration of \emph{4-manifold} triangulations.
In this higher-dimensional setting, our techniques can be applied to
edge links rather than vertex links.
Vertices on the other hand become more difficult to handle: each vertex link
must be a 3-sphere, and 3-sphere recognition remains a difficult
algorithmic problem \cite{burton10-pachner,matveev03-algms}.
Here research into algebraic techniques may yield
new heuristics to further prune the search tree.

Throughout this paper we restrict our attention to closed
3-mani\-folds,
although all of the results presented here extend easily to manifolds
with boundary.

All algorithms described in this paper can be downloaded as
part of {\regina} \cite{regina,burton04-regina}, an open-source software
package for the algebraic and combinatorial manipulation of 3-manifolds and
their triangulations.

%
%

\section{Preliminaries} \label{s-prelim}

Consider a collection of $n$ tetrahedra (these are abstract objects, and
need not be embedded in some $\R^d$).  A \emph{combinatorial
triangulation} of \emph{size $n$} is obtained by affinely identifying
(or ``gluing together'') the $4n$ tetrahedron faces in pairs.\footnote{%
    A combinatorial triangulation need not be a simplicial complex,
    and need not represent a topological 3-manifold.  The word
    ``combinatorial'' indicates that we are only interested in face
    identifications, with no topological requirements.}
Specifically, it consists of the following data:
\begin{itemize}
    \item a partition of the $4n$ tetrahedron faces into $2n$ pairs,
    indicating which faces are to be identified;
    \item $2n$ permutations of three elements, indicating which of the six
    possible rotations or reflections is to be used for each
    identification.
\end{itemize}

For instance, consider the following example with $n=3$ tetrahedra.
The tetrahedra are labelled $A,B,C$, and the four vertices of each
tetrahedron are labelled $0,1,2,3$.
\[ \small \begin{array}{c|r|r|r|r}
\mathrm{Tetrahedron} & \mathrm{Face}\ 012 & \mathrm{Face}\ 013 &
\mathrm{Face}\ 023 & \mathrm{Face}\ 123 \\
\hline
A & C: 013 & B: 012 & A: 312 & A: 230 \\
B & A: 013 & C: 120 & C: 231 & C: 302 \\
C & B: 301 & A: 012 & B: 231 & B: 302
\end{array} \]
The top-left cell of this table indicates that face $012$ of tetrahedron~$A$
is identified with face $013$ of tetrahedron~$C$, using the rotation or
reflection that maps vertices $0,1,2$ of tetrahedron~$A$ to vertices
$0,1,3$ of tetrahedron~$C$ respectively.  For convenience, the same
identification is also shown from the other direction in the second cell
of the bottom row.

As a consequence of these face identifications, we find that several
tetrahedron edges become identified together; each such equivalence
class is called an \emph{edge of the triangulation}.  Likewise, each
equivalence class of identified vertices is called a \emph{vertex of the
triangulation}.  The triangulation illustrated above has three edges and
just one vertex.

The \emph{face pairing graph} of a combinatorial triangulation is the
4-valent multigraph whose nodes represent tetrahedra and whose edges
represent face identifications.  The face pairing graph for the example
above is shown in Figure~\ref{fig-grapheg}.
A combinatorial triangulation is called \emph{connected} if and only if
its face pairing graph is connected.

\begin{figure}[htb]
    \centering
    \includegraphics[scale=1.0]{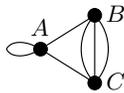}
    \caption{An example face pairing graph}
    \label{fig-grapheg}
\end{figure}

The \emph{vertex links} of a triangulation are obtained as
follows.  In each tetrahedron we place four triangles surrounding the
four vertices, as shown in Figure~\ref{fig-linktri}.  We then glue
together the edges of these triangles in a manner consistent with the
face identifications of the surrounding tetrahedra, as illustrated
in Figure~\ref{fig-linkglue}.

\begin{figure}[htb]
    \centering
    \includegraphics[scale=0.5]{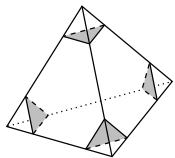}
    \caption{The triangles that form the vertex links}
    \label{fig-linktri}
\end{figure}

\begin{figure}[htb]
    \centering
    \includegraphics[scale=0.5]{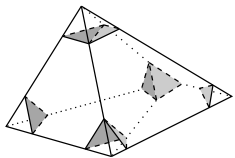}
    \caption{Joining vertex linking triangles along their edges
        in adjacent tetrahedra}
    \label{fig-linkglue}
\end{figure}

The result is a collection of triangulated closed surfaces, one surrounding
each vertex of the triangulation.  The surface surrounding vertex
$V$ is referred to as the \emph{link of $V$}.
Topologically, this represents the boundary of a small neighbourhood
of $V$ in the triangulation.

A \emph{3-manifold triangulation} is a combinatorial triangulation that,
when viewed as a topological space, represents a 3-manifold.
Equivalently, a 3-manifold triangulation is a combinatorial
triangulation in which:
\begin{enumerate}[(i)]
    \item each vertex link is a topological sphere;
    \item no tetrahedron edge is identified with itself in reverse as a
    result of the face identifications.\footnote{%
        An equivalent condition to (ii)
        is that we can direct the edges of every
        tetrahedron in a manner consistent with the face identifications.}
\end{enumerate}

The earlier example is \emph{not} a 3-manifold triangulation,
since the link of the (unique) vertex is a torus, not a sphere.
For many 3-manifolds $M$, the size of a minimal triangulation of $M$
corresponds to the Matveev complexity of $M$
\cite{martelli02-decomp,matveev03-algms}.

A \emph{partial triangulation} is a combinatorial
triangulation in which we identify only $2k$ of the $4n$ tetrahedron
faces in pairs, for some $0 \leq 2k \leq 4n$.  We define vertices,
edges and vertex links as before, noting that vertex links might
now be surfaces with boundary (not closed surfaces).

A typical enumeration algorithm works as follows
\cite{burton07-nor10,hildebrand89-cuspedcensusold}:

\begin{algorithm}
Suppose we wish to
enumerate all connected 3-manifold triangulations of size $n$ satisfying
some set of properties $P$.  The main steps are:
\begin{enumerate}
    \item Enumerate all possible face pairing graphs (i.e., all
    connected 4-valent multigraphs on $n$ nodes).

    \item For each graph, recursively try all $6^{2n}$ possible
    rotations and reflections for identifying the corresponding
    tetrahedron faces.  Each ``partial selection'' of rotations and
    reflections gives a partial triangulation, and
    recursion and backtracking correspond to gluing and ungluing
    tetrahedron faces in these partial triangulations.

    \item Whenever we have a partial triangulation, run a series of
    tests that can identify situations where, no matter how we glue the
    remaining faces together, we can never obtain a 3-manifold triangulation
    satisfying the properties in $P$.  If this is the case, prune the
    current branch of the search tree and backtrack immediately.

    \item Whenever we have a complete selection of $6^{2n}$ rotations
    and reflections, test whether (i)~the corresponding combinatorial
    triangulation is in fact a 3-manifold triangulation, and
    (ii)~whether this 3-manifold triangulations satisfies the required
    properties in $P$.
\end{enumerate}
\end{algorithm}

There is also the problem of avoiding isomorphisms.
This is computationally cheap if the recursion is ordered carefully;
for details, see the full version of this paper.

In practice, step~1 is negligible---almost all of the computational work
is in the recursive search (steps 2--4).  The tests in step~3 are
critical: they must be extremely fast, since they are run at every stage of
the recursive search.  Moreover, if chosen carefully, these tests can prune
vast sections of the search tree and speed up the enumeration substantially.

A useful observation is that some graphs can be eliminated
immediately after step~1.
See \cite{burton04-facegraphs,burton07-nor10,martelli01-or9} for algorithms
that incorporate such techniques.

%
%

\section{Tracking Vertex Links} \label{s-links}

In this paper we add the following test to step~3 of the enumeration
algorithm:

\begin{test} \label{test-genus}
    Whenever we have a partial triangulation $\tri$, test whether all
    vertex links are spheres with zero or more punctures.  If not,
    prune the current branch of the search tree and backtrack immediately.
\end{test}

Theoretically, it is simple to show that this test works:

\begin{lemma}
    If $\tri$ is a partial triangulation and the link of some vertex $V$
    is not a sphere with zero or more punctures, then
    there is no way to glue together the remaining faces of $\tri$
    to obtain a 3-manifold triangulation.
\end{lemma}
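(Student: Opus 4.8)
The plan is to prove the contrapositive: assuming some completion of $\tri$ is a 3-manifold triangulation, I will show that the partial link of every vertex is already a punctured sphere. The essential observation is that the partial link and the final link are built from the \emph{same} set of vertex-linking triangles; the only difference is which edge identifications have been carried out. Concretely, fix $V$ and let $L_V$ denote its link in $\tri$. Each tetrahedron face that meets $V$ but is not yet glued contributes exactly one pair of boundary edges to $L_V$, and performing the corresponding face identification glues that pair together. Hence, if $\tri$ is completed to a 3-manifold triangulation with vertex link $L_V^{*}$, then $L_V^{*}$ is obtained from $L_V$ by a finite sequence of gluings along pairs of boundary edges, and by hypothesis $L_V^{*} = S^2$.

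The heart of the argument is then a purely two-dimensional monotonicity statement: gluing a single pair of boundary edges of a compact surface can never decrease the genus of the component(s) involved, and can never turn a non-orientable surface into an orientable one. I would prove this by a short Euler-characteristic computation together with the classification of compact surfaces. Gluing one pair of boundary edges leaves the number of triangles unchanged, decreases the number of edges by one, and decreases the number of vertices by the number of endpoint identifications it forces; feeding the resulting change in $\chi$ into $\chi = 2 - 2g - b$ (orientable) or $\chi = 2 - k - b$ (non-orientable) and tracking the boundary-component count $b$ shows that $g$ (respectively $k$) cannot go down. Equivalently --- and this is the cleaner route --- one may read the relation backwards: $L_V$ is obtained from the sphere $L_V^{*}$ by \emph{cutting} along the arcs dual to the not-yet-performed gluings, and cutting a surface along an embedded arc cannot increase the genus and cannot turn an orientable surface non-orientable. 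Since $S^2$ has genus $0$ and is orientable, every piece produced by cutting is orientable of genus $0$, i.e.\ a punctured sphere.

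Combining these two ingredients gives the lemma. If $\tri$ admits any completion to a 3-manifold triangulation, then by definition each vertex link $L_V^{*}$ is a sphere, so by the monotonicity statement every component of the partial link $L_V$ is an orientable genus-$0$ surface, i.e.\ a sphere with zero or more punctures. Contrapositively, if the link of some vertex $V$ fails to be a punctured sphere --- because it has positive genus or is non-orientable --- then no sequence of further gluings can reduce it to a sphere, and no completion of $\tri$ can be a 3-manifold triangulation. Note that the second condition in the definition of a 3-manifold triangulation (that no edge is identified with itself in reverse) plays no role here: failure of the vertex-link condition alone already obstructs every completion.

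I expect the main obstacle to be the rigorous justification of the monotonicity statement for a single boundary-edge gluing. The subtlety is the case analysis: the two glued edges may lie on the same boundary component or on two different ones (possibly in two different components of the surface), and their endpoints may or may not already be identified, with each case affecting $b$ and $\chi$ differently. The dual formulation in terms of cutting along a properly embedded arc is attractive precisely because it collapses this casework into a single standard fact about surfaces, at the cost of having to set up the correspondence between remaining face gluings and arcs in $L_V^{*}$ carefully.
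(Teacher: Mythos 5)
Your proposal is correct and, in its preferred form (reading the relation backwards: the partial link embeds in the completed spherical link, so it is a subsurface of $S^2$ and hence a punctured sphere), it is essentially the same two-line argument the paper gives --- the paper likewise passes to the contrapositive, notes that the final link $L'$ is a sphere obtained from $L$ by attaching triangles, and concludes that $L$ is an embedded subsurface of the sphere. The additional Euler-characteristic monotonicity machinery you sketch is sound but unnecessary once you adopt the subsurface viewpoint, which is why the paper dispenses with the case analysis entirely.
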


\begin{proof}
    Suppose we \emph{can} glue the remaining faces together to form a
    3-manifold triangulation $\tri'$.
    Let $L$ and $L'$ be the links of $V$ in $\tri$ and $\tri'$
    respectively; since $\tri'$ is a 3-manifold triangulation,
    $L'$ must be a topological sphere.

    This link $L'$ is obtained from $L$ by attaching zero or more additional
    triangles.  Therefore $L$ is an embedded subsurface
    of the sphere, and so $L$ must be a sphere with zero or more punctures.
\end{proof}

The test itself is straightforward; the difficulty lies in performing it
\emph{quickly}.  A fast implementation is crucial, since it will be
called repeatedly throughout the recursive search.

The key idea is to track the 1-dimensional \emph{boundary curves} of the
vertex links, which are formed from
cycles of edges belonging to vertex-linking triangles.
As we glue tetrahedron faces
together, we repeatedly split and splice these boundary cycles.
To verify Test~\ref{test-genus}, we must track which
triangles belong to the same vertex links and which edges belong to the
same boundary cycles, which we can do in expected logarithmic time
using union-find and skip lists respectively.

In the sections below, we describe what additional data needs to be
stored (Section~\ref{s-links-data}),
how to manipulate and use this data (Section~\ref{s-links-glue}),
and how skip lists can ensure a small time complexity
(Section~\ref{s-links-skip}).

\subsection{Data structures} \label{s-links-data}

In a partial triangulation with $n$ tetrahedra,
there are $4n$ \emph{vertex linking triangles} that together form the
vertex links (four such triangles are shown in Figure~\ref{fig-linktri}).
These $4n$ triangles are surrounded by a total of $12n$ \emph{vertex
linking edges}.

Each time we glue together two tetrahedron faces, we consequently
glue together three pairs of vertex linking edges, as shown in
Figure~\ref{fig-linkglue}.  This gradually combines the triangles
into a collection of larger triangulated surfaces, as illustrated in
Figure~\ref{fig-partiallink}.
The boundary curves of these surfaces are
drawn in bold in Figure~\ref{fig-partiallink};
these are formed from the vertex linking edges that have
not yet been paired together.

\begin{figure}[htb]
    \centering
    \includegraphics[scale=0.5]{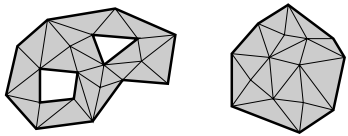}
    \caption{Vertex linking surfaces after gluing several tetrahedron
        faces}
    \label{fig-partiallink}
\end{figure}

To support Test~\ref{test-genus}, we store all $12n$ vertex linking
edges in a series of cyclic list structures that describe these boundary curves.
To simplify the discussion, we begin with a na\"ive implementation based on
doubly-linked lists.  However, this leaves us with an $O(n)$ operation
to perform, as seen in Section~\ref{s-links-glue}.
To run all operations in expected logarithmic time
we use skip lists \cite{pugh90-skiplists}, which we outline in
Section~\ref{s-links-skip}.

We treat the vertex linking edges as \emph{directed edges}
(i.e., arrows), with directions chosen arbitrarily at the beginning of
the enumeration algorithm.
For each vertex linking edge~$e$, we store the following data:
\begin{itemize}
    \item If $e$ is part of a boundary curve, we store the two edges
    adjacent to $e$ along this boundary curve, as well as two booleans
    that tell us whether these adjacent edges point in the same or opposite
    directions.

    \item If $e$ is not part of a boundary curve (i.e., it has been
    glued to some other vertex linking edge and is now internal to a
    vertex linking surface), we store a snapshot of the above data
    from the last time that $e$ \emph{was} part of a boundary curve.
\end{itemize}

To summarise: edges on the boundary curves are stored in a series of
doubly-linked lists, and internal edges
remember where they \emph{were} in these lists right before they were
glued to their current partner.

\subsection{Recursion, backtracking and testing} \label{s-links-glue}

Recall that each time we glue two tetrahedron faces together, we must
glue together \emph{three} pairs of vertex linking edges.  Each of these
edge gluings changes the vertex linking surfaces, and so we process each
edge gluing individually.

There are three key operations that we must perform in relation to edge
gluings:
\begin{enumerate}[(i)]
    \item gluing two vertex linking edges together (when we step forward
    in the recursion);
    \item ungluing two vertex linking edges (when we backtrack);
    \item verifying Test~\ref{test-genus} after gluing two vertex linking
    edges together (i.e., verifying that all vertex links are spheres
    with zero or more punctures).
\end{enumerate}

We now present the details of each operation in turn.  Throughout this
discussion we assume that edges are glued together so that all vertex
links are \emph{orientable} surfaces; the paper \cite{burton07-nor10}
describes an efficient framework for detecting non-orientable vertex
links as soon as they arise.

\subsubsection*{Recursion: gluing edges together}

Suppose we wish to glue together edges $x$ and $y$, as illustrated in
Figure~\ref{fig-glueedges}.  Only local modifications are required:
edges $p$ and $r$ become adjacent and must now to link to each other
(instead of to $x$ and $y$); likewise, edges $q$ and $s$ must be adjusted
to link to each other.  Note that this gluing introduces a
change of direction where $p$ and $r$ meet (and likewise for $q$ and $s$),
so as we adjust the direction-related booleans we must perform an extra
negation on each side.

\begin{figure}[htb]
    \centering
    \includegraphics[scale=0.9]{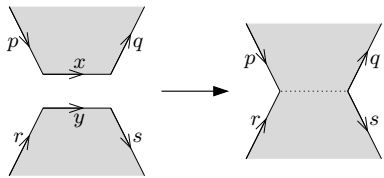}
    \caption{Gluing two vertex linking edges together}
    \label{fig-glueedges}
\end{figure}

We make no changes to the data stored for edges $x$ and $y$, since
these two edges are now internal and their associated data now represents
a snapshot from the last time that they were boundary edges (as
required by Section~\ref{s-links-data}).

All of these local modifications can be performed in $O(1)$ time.
The associated list operations are deletion, splitting and splicing;
this becomes important when we move to skip lists in
Section~\ref{s-links-skip}.

It is important to remember the special case in which edges $x$ and $y$
are adjacent in the same boundary cycle.  Here the local
modifications are slightly different (there are only two or possibly
zero nearby edges to
update instead of four), but these modifications remain $O(1)$ time.

\subsubsection*{Backtracking: ungluing edges}

As with gluing, ungluing a pair of vertex linking edges is a
simple matter of local modifications.  Here the backtracking context is
important: it is essential that we unglue edges in the reverse order to
that in which they were glued.

Suppose we are ungluing edges $x$ and $y$ as depicted in
Figure~\ref{fig-glueedges}.  The snapshot data stored with edges $x$ and
$y$ shows that they \emph{were} adjacent to edges $p$, $q$, $r$ and $s$
immediately before this gluing was made (and therefore immediately
\emph{after} the ungluing that we are now performing).

This snapshot data therefore gives us access to edges
$p$, $q$, $r$ and $s$: now we simply
adjust $p$ and $q$ to link to $x$ (instead of $r$ and $s$), and likewise
we adjust $r$ and $s$ to link to $y$.  No modifications to edges $x$ and
$y$ are required.

Again we must adjust the direction-related booleans carefully, and we
must cater for the case in which edges $x$ and $y$ were adjacent
immediately before the gluing was made.

As before, all local modifications can be performed in $O(1)$ time.
For the skip list discussion in Section~\ref{s-links-skip},
the associated list operations are splitting, splicing and insertion.

\subsubsection*{Testing: verifying that links are punctured spheres}

Each time we glue two vertex linking edges together we must ensure that
every vertex link is a sphere with zero or more punctures
(Test~\ref{test-genus}).
We test this \emph{incrementally}: we assume this is true
\emph{before} we glue these edges
together, and we verify that our new gluing does not introduce any
unwanted genus to the vertex links.

Our incremental test is based on the following two observations:

\begin{lemma} \label{l-genus-twospheres}
    Let $S$ and $S'$ be distinct triangulated spheres with punctures,
    and let $x$ and $y$ be boundary edges from $S$ and $S'$
    respectively.  If we glue $x$ and $y$ together, the resulting
    surface is again a sphere with punctures.
\end{lemma}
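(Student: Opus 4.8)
The plan is to invoke the classification of compact surfaces. Since $S$ and $S'$ are punctured spheres they are connected and orientable, and gluing the single boundary edge $x$ to $y$ keeps the result $R$ connected, as the two pieces now share an edge. Orientability is also preserved: fixing an orientation of $S$ orients $x$ via its induced boundary orientation, and we then pick the orientation of $S'$ (one of its two) whose induced boundary orientation on $y$ is opposite, so that the two orientations agree across the glued edge; since this is the only identification, $R$ inherits a global orientation. Thus $R$ is a compact, connected, orientable surface with boundary, and such a surface is determined up to homeomorphism by its genus $g$ and its number of boundary curves $b$ via $\chi = 2 - 2g - b$. It therefore suffices to compute $\chi(R)$ and $b(R)$ and to deduce that $g = 0$.

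First I would compute the Euler characteristic from the disjoint union, where $\chi(S \sqcup S') = \chi(S) + \chi(S')$. The gluing identifies the two distinct edges $x$ and $y$ into one (so $E$ drops by $1$) and identifies their four distinct endpoints in pairs (so $V$ drops by $2$), while no faces are affected; hence $\chi(R) = \chi(S) + \chi(S') - 1$. Next I would count boundary curves. Because $S$ and $S'$ are distinct, $x$ lies on a boundary circle $C$ of $S$ and $y$ on a boundary circle $C'$ of $S'$ with $C \neq C'$. Removing $x$ from $C$ leaves an arc, removing $y$ from $C'$ leaves an arc, and after the identification these two arcs join at their now-shared endpoints into a single circle, while every other boundary circle is untouched; thus $C$ and $C'$ merge into one and $b(R) = b(S) + b(S') - 1$. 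Writing each punctured sphere as genus $0$, so $\chi(S) = 2 - b(S)$ and $\chi(S') = 2 - b(S')$, substitution into $\chi(R) = 2 - 2g(R) - b(R)$ forces $g(R) = 0$, which is exactly the claim.

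The main obstacle is the boundary-curve count: one must check carefully that gluing a single edge genuinely merges the two boundary circles into one, rather than splitting a circle or leaving the count unchanged, and this rests on tracking the vertex identifications at the endpoints of $x$ and $y$. A convenient cross-check, which I would include to confirm the arithmetic, is to cap off all punctures of $R$ with disks: the capped surface $\hat{R}$ has $\chi(\hat{R}) = \chi(R) + b(R) = 2$, so as a closed connected orientable surface it must be the $2$-sphere, whence $R$ is the sphere with $b(R)$ punctures. The orientability step is comparatively easy here precisely because there is only one gluing edge and hence a single compatibility condition to satisfy; this is in contrast to gluing two edges of a single surface, where orientability can genuinely fail.
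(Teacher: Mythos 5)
The paper offers no proof of this lemma to compare against---it dismisses Lemmata~\ref{l-genus-twospheres} and~\ref{l-genus-onesphere} with ``the proofs \ldots are simple, and we do not give further details here''---so your Euler-characteristic argument is filling a genuine omission, and in the generic case it is correct: the counts $\chi(R)=\chi(S)+\chi(S')-1$ and $b(R)=b(S)+b(S')-1$, together with the classification of compact orientable surfaces, do force $g(R)=0$.

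There is, however, one case your write-up excludes by fiat, and it is precisely the case the paper warns about: immediately after the two lemmas the author notes that they ``hold even with very short boundaries (for instance, one-edge boundaries consisting of $x$ or $y$ alone).'' In these vertex-linking surfaces a boundary cycle may consist of a single edge, so $x$ (or $y$) can be a loop whose two endpoints coincide. Your steps ``identifies their four distinct endpoints in pairs (so $V$ drops by $2$)'' and ``removing $x$ from $C$ leaves an arc'' both fail there. If $x$ and $y$ are both loops, only one vertex identification occurs, giving $\chi(R)=\chi(S)+\chi(S')$, and the boundary circles $\{x\}$ and $\{y\}$ disappear outright rather than merging, giving $b(R)=b(S)+b(S')-2$; in the mixed case one gets yet another pair of counts. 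The conclusion survives in every case---the shifts in $\chi$ and $b$ compensate, and indeed your capping cross-check is the robust way to see this, since $\chi(\hat{R})=\chi(R)+b(R)=\chi(\hat{S})+\chi(\hat{S}')-2=2$ comes out the same under all three case distinctions---but as written the proof silently assumes the four endpoints are distinct, which is exactly the hypothesis the application does not permit. A sentence dispatching the loop cases (or running the whole argument through $\chi(R)+b(R)$ from the start) would close the gap.
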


\begin{lemma} \label{l-genus-onesphere}
    Let $S$ be a triangulated sphere with punctures, and let $x$ and $y$
    be distinct boundary edges of $S$.  If we glue $x$ and $y$ together
    in an orientation-preserving manner, the resulting surface is a sphere
    with punctures if and only if $x$ and $y$ belong to the same
    boundary cycle of $S$.
\end{lemma}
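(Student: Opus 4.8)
The plan is to prove the lemma by tracking three topological invariants as we perform the gluing: orientability, the Euler characteristic $\chi$, and the number $b$ of boundary cycles. Since the gluing is orientation-preserving, the resulting surface is again orientable, so by the classification of compact orientable surfaces it is determined by its genus $g$ and boundary count via $\chi = 2 - 2g - b$. Being ``a sphere with punctures'' is precisely the condition $g = 0$. My strategy is therefore to compute $\chi$ and $b$ for the glued surface in each of the two cases ($x$ and $y$ on the same boundary cycle, or on different boundary cycles) and read off the resulting genus.

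First I would record the effect of the gluing on $\chi$. Identifying the two distinct boundary edges $x$ and $y$ is, up to homeomorphism, the attachment of a single band (a $1$-handle) joining collar neighbourhoods of $x$ and $y$; since a band is contractible and is attached along two arcs, an inclusion--exclusion computation gives $\chi' = \chi(S) - 1$. (Equivalently, after barycentrically subdividing so that $x$ and $y$ have four distinct endpoints, the gluing merges two $1$-cells into one and identifies the endpoints in two pairs, again dropping $\chi$ by exactly one.) For a punctured sphere we have $g = 0$ and hence $\chi(S) = 2 - b$, so $\chi' = 1 - b$ regardless of which case we are in.

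The heart of the argument is the effect on the boundary count $b$, which I would obtain by tracing the boundary curves through the identification and using orientability. Fix an orientation of $S$; this induces a consistent orientation on every boundary cycle, and an orientation-preserving gluing identifies $x$ with $y$ with their induced boundary directions reversed. If $x$ and $y$ lie on the same cycle $C$, the two edges cut $C$ into two arcs, and after the identification each arc closes up into its own loop, so $C$ splits into two cycles and $b' = b + 1$. If instead $x$ and $y$ lie on distinct cycles $C_1$ and $C_2$, the complementary arcs of $C_1$ and $C_2$ join end-to-end into a single loop, so the two cycles merge and $b' = b - 1$. The orientation bookkeeping here is the main obstacle: one must verify that the reversed matching of induced directions is exactly what forces the ``same cycle'' arcs to close into two separate loops rather than one.

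Finally I would combine these counts through $g' = (2 - \chi' - b')/2$. In the same-cycle case this gives $g' = (2 - (1-b) - (b+1))/2 = 0$, so the surface is a punctured sphere; in the different-cycle case it gives $g' = (2 - (1-b) - (b-1))/2 = 1$, so the surface has positive genus and is not a punctured sphere. As these two cases are exhaustive, the glued surface is a sphere with punctures exactly when $x$ and $y$ share a boundary cycle, which proves the lemma in both directions.
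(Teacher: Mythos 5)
The paper itself offers no proof of this lemma (it declares the proofs of Lemmata~2 and~3 ``simple'' and omits them), and your strategy---classify the glued surface by orientability, Euler characteristic and boundary count, then read off the genus from $\chi = 2-2g-b$---is surely the intended argument and is correct in the generic case. However, there is a genuine gap: your two key intermediate claims, that $\chi$ drops by exactly $1$ and that a same-cycle gluing splits one boundary circle into two (resp.\ a different-cycle gluing merges two into one), are false precisely in the degenerate configurations that the paper goes out of its way to flag (``adjacent edges'' on the same cycle, and ``one-edge boundaries consisting of $x$ or $y$ alone''). If $x$ and $y$ share one endpoint, the identification merges only one vertex pair, so $\chi$ is unchanged and the boundary circle does not split (glue two adjacent edges of a triangle: you get a disc, not an annulus); if they share both endpoints, $\chi$ \emph{increases} by $1$ and the boundary circle vanishes outright (a bigon disc becomes a sphere); if $x$ and $y$ are each a one-edge boundary cycle, gluing them kills \emph{two} boundary circles, not one. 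Your parenthetical fix---``barycentrically subdividing so that $x$ and $y$ have four distinct endpoints''---does not work: subdivision inserts midpoints but cannot separate an endpoint that $x$ and $y$ genuinely share as a point of the surface, and the band-attachment model is likewise not a homeomorphism of the quotient in these cases (it gives an annulus where the true quotient is a disc).

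It happens that your final genus computation survives all of these cases, because the errors in $\chi'$ and in $b'$ always cancel in the combination $\chi'+b'$ (equivalently, the Euler characteristic of the capped-off closed surface, which is all that $g$ depends on, does change uniformly). But a proof that asserts false intermediate statements is not complete: you should either enumerate and check the degenerate adjacency patterns separately, or recast the bookkeeping directly in terms of the invariant $\chi + b = 2 - 2g$, which is what actually behaves uniformly under the gluing.
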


\begin{figure}[htb]
    \centering
    \subfigure[Two distinct spheres with punctures]{%
        \label{fig-genus-twospheres}%
        \includegraphics[scale=0.88]{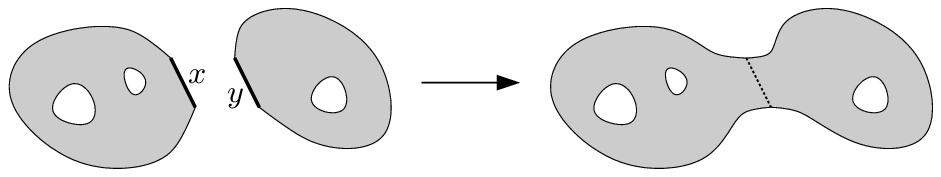}} \\
    \subfigure[Same sphere, different boundary cycles]{%
        \label{fig-genus-twocycles}%
        \includegraphics[scale=0.9]{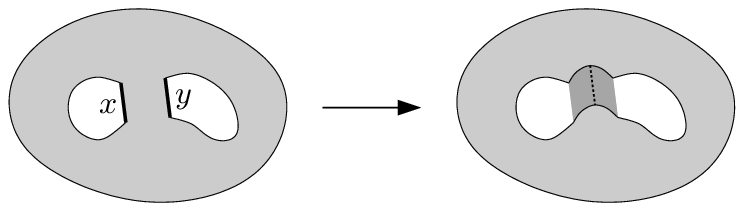}} \\
    \subfigure[Same boundary cycle, non-adjacent edges]{%
        \label{fig-genus-onecycle}%
        \includegraphics[scale=0.9]{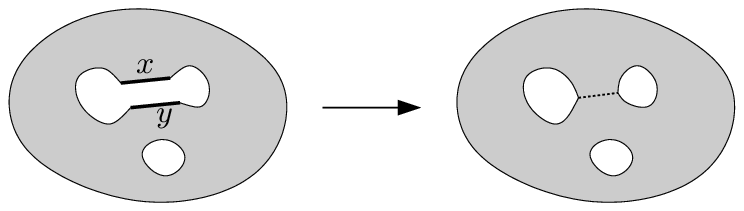}} \\
    \subfigure[Same boundary cycle, adjacent edges]{%
        \label{fig-genus-onecycleadj}%
        \includegraphics[scale=0.9]{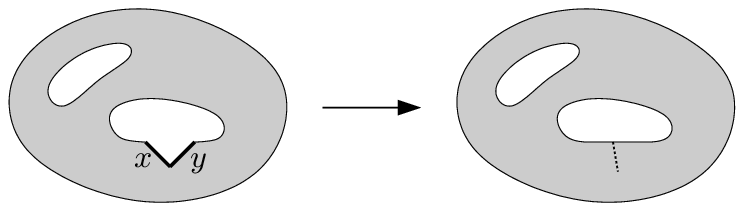}}
    \caption{Different ways of gluing boundary edges together}
\end{figure}

The proofs of Lemmata~\ref{l-genus-twospheres} and \ref{l-genus-onesphere}
are simple, and we do not give further details here.
Figure~\ref{fig-genus-twospheres} illustrates the scenario of
Lemma~\ref{l-genus-twospheres} with two distinct punctured spheres,
and Figures \ref{fig-genus-twocycles}--\ref{fig-genus-onecycleadj} show
different scenarios from Lemma~\ref{l-genus-onesphere} in which
we join two boundary edges from the same punctured sphere.

In particular,
Figure~\ref{fig-genus-twocycles} shows the case in which $x$ and $y$ belong
to different boundary cycles; here we observe that the resulting surface
is a twice-punctured torus.  Figures~\ref{fig-genus-onecycle}
and~\ref{fig-genus-onecycleadj} show cases with $x$ and $y$ on
the same boundary cycle; in \ref{fig-genus-onecycleadj},
$x$ and $y$ are adjacent along the boundary.

Note that Lemmata~\ref{l-genus-twospheres} and \ref{l-genus-onesphere}
hold even with very short boundaries (for instance, one-edge boundaries
consisting of $x$ or $y$ alone).

It is now clear how to incrementally verify Test~\ref{test-genus} when
we glue together vertex linking edges $x$ and $y$:
\begin{enumerate}
    \item
    Test whether the vertex linking triangles containing $x$ and
    $y$ belong to the same connected vertex linking surface.
    If not, the test passes.  Otherwise:

    \item
    Test whether $x$ and $y$ belong to the same doubly-linked list
    of boundary edges (i.e., the same boundary cycle).  If so, the test
    passes.  If not, the test fails.
\end{enumerate}

Here we implicitly assume that all gluings are
orientation-preserving, as noted at the beginning of
Section~\ref{s-links-glue}.

Step~1 can be performed in $O(\log n)$ time using the modified union-find
structure outlined in \cite{burton07-nor10}.
The original purpose of this structure was to enforce orientability in
vertex linking surfaces, and one of the operations it provides is an
$O(\log n)$ test for whether two vertex linking triangles belong to the
same connected vertex linking surface.  This modified union-find
supports backtracking; see \cite{burton07-nor10} for further details.

Step~2 is more difficult: a typical implementation might involve walking
through the doubly-linked list containing $x$ until we either find $y$
or verify that $y$ is not present, which takes $O(n)$ time to complete.
Union-find cannot help us, because of our repeated splitting and splicing
of boundary cycles.
In the following section we show how to reduce this $O(n)$ running time to
expected $O(\log n)$ by extending our doubly-linked lists to become
\emph{skip lists}.

It should be noted that Step~2 can in fact be carried out in $O(b)$
time, where $b$ is the number of boundary edges on all vertex linking
surfaces.  Although $b \in O(n)$ in general, for some face pairing
graphs $b$ can be far smaller.  For instance, when the face pairing
graph is a double-ended chain \cite{burton04-facegraphs}, we can arrange
the recursive search so that $b \in O(1)$.
See the full version of this paper for details.

\subsection{Skip lists and time complexity} \label{s-links-skip}

From the discussion in Section~\ref{s-links-glue}, we see that with
our na\"ive doubly-linked list implementation, the three
key operations of gluing edges, ungluing edges and verifying
Test~\ref{test-genus} have $O(1)$, $O(1)$ and $O(n)$ running times
respectively.
The bottleneck is the $O(n)$ test for whether two vertex linking edges
belong to the same doubly-linked list (i.e., the same boundary cycle).

We can improve our situation by extending our doubly-linked list to a
\emph{skip list} \cite{pugh90-skiplists}.  Skip lists are essentially
linked lists with additional layers of ``express pointers'' that allow us to
move quickly through the list instead of stepping forward one element
at a time.  Figure~\ref{fig-skiplist} shows a typical skip list structure.

\begin{figure}[htb]
    \centering
    \includegraphics[scale=0.5]{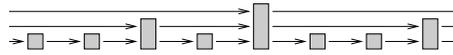}
    \caption{The internal layout of a skip list}
    \label{fig-skiplist}
\end{figure}

The list operations used in Section~\ref{s-links-glue} for gluing and ungluing
edges are deletion, insertion, splitting and splicing; all of these
can be performed on a skip list in expected $O(\log n)$ time
\cite{pugh90-cookbook,pugh90-skiplists}.
Importantly, it also takes expected
$O(\log n)$ time to search forward to the last
element of a skip list.\footnote{%
    Although our lists are cyclic, we can always define an arbitrary
    endpoint.}
We can therefore test whether vertex linking edges $x$ and $y$ belong to
the same list by searching forward to the end of each list
and testing whether the final elements are the same.

It follows that, with a skip list implementation, all three key operations
of gluing edges, ungluing edges and verifying Test~\ref{test-genus}
run in expected $O(\log n)$ time.  Therefore:

\begin{theorem}
    It is possible to implement Test~\ref{test-genus} by performing
    expected $O(\log n)$ operations at every stage of the recursive search.
\end{theorem}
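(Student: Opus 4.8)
The plan is to recognise that this theorem is a bookkeeping consequence of the three operational analyses carried out in Sections~\ref{s-links-glue} and~\ref{s-links-skip}, and to assemble them into a single per-stage bound. First I would pin down what ``a stage of the recursive search'' means: each step forward glues one tetrahedron face, which---by the observation opening Section~\ref{s-links-glue}---amounts to gluing exactly three pairs of vertex linking edges, and each backtracking step unglues three such pairs. Since three is a constant, it suffices to show that each individual edge-level operation---gluing two vertex linking edges, ungluing two vertex linking edges, and verifying Test~\ref{test-genus} after a gluing---runs in expected $O(\log n)$ time; summing a constant number of these preserves the bound.

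Next I would dispatch the gluing and ungluing operations. From Section~\ref{s-links-glue} these require only $O(1)$ pointer and boolean updates, but the underlying structural edits to the cyclic lists are precisely deletion, insertion, splitting and splicing. Invoking the skip-list complexity results \cite{pugh90-skiplists}, each of these four operations runs in expected $O(\log n)$ time, so gluing and ungluing each cost expected $O(\log n)$. I would also note explicitly that the adjacent-edge special cases touch even fewer pointers, so they are subsumed by the same bound.

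The substantive work, and the step I expect to be the main obstacle, is bounding the verification of Test~\ref{test-genus}. Here I would first justify the two-step criterion of Section~\ref{s-links-glue} using Lemmata~\ref{l-genus-twospheres} and~\ref{l-genus-onesphere}: gluing edges on distinct surfaces can never create genus (Lemma~\ref{l-genus-twospheres}), while gluing two edges of one surface preserves the punctured-sphere property exactly when they lie on the same boundary cycle (Lemma~\ref{l-genus-onesphere}), so the contrapositive detects precisely the bad gluings. Step~1 (same surface?) is an $O(\log n)$ query in the backtracking-aware union-find structure of \cite{burton07-nor10}. Step~2 (same boundary cycle?) is the real difficulty, since union-find is useless under our repeated splitting and splicing of cycles, and naive traversal of a list is $O(n)$. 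The key idea is that two edges lie in the same cyclic skip list if and only if, after fixing an arbitrary endpoint, a forward search to the terminal element returns the same element from both starting edges; because forward search to the end of a skip list is expected $O(\log n)$ \cite{pugh90-skiplists}, Step~2 runs in expected $O(\log n)$.

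Finally I would combine the pieces: verifying the test costs expected $O(\log n)$, as does each glue and each unglue, so a single stage---a constant number of such operations---runs in expected $O(\log n)$ time, which is exactly the claim.
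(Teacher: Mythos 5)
Your proposal is correct and follows essentially the same route as the paper: decompose each stage into a constant number of edge-level operations, bound gluing and ungluing by the skip-list costs of deletion, insertion, splitting and splicing, and bound the verification step by combining the $O(\log n)$ union-find query for ``same surface'' with the expected $O(\log n)$ forward search to the end of a skip list for ``same boundary cycle''. No substantive differences to report.
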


Note that we must include reverse links at the lowest layer
of the skip list (effectively maintaining the original doubly-linked list),
since both forward and backward links are required for the
gluing and ungluing operations (see Section~\ref{s-links-glue}).
Full details of the skip list implementation can be found in the full
version of this paper.

%
%

\section{Performance} \label{s-perf}

Here we measure the performance of our new algorithm experimentally.
Specifically, we compare two enumeration algorithms:
the \emph{old algorithm}, which includes all of the optimisations
described in \cite{burton07-nor10} (including the union-find framework
for ensuring that vertex links remain orientable), and the
\emph{new algorithm}, which enhances the old algorithm with the
new tests described in Section~\ref{s-links}.\footnote{%
    We compare the new algorithm against \cite{burton07-nor10} because
    this allows us to isolate our new techniques, and because
    the source code and implementation details for alternative
    algorithms \cite{martelli01-or9,matveev03-algms} are not readily available.}

We run our performance tests by enumerating censuses of all
3-manifold triangulations of size $n \leq 7$.  We choose this type of
census because a census of minimal triangulations requires significant
manual post-processing \cite{burton07-nor10}, and because a census of
all triangulations is significantly larger and therefore a stronger
``stress test''.  We restrict our tests to $n \leq 7$ because for larger
$n$ the old algorithm becomes too slow to run time trials on a single
CPU.

\begin{table}[htb]
\centering
\small
\begin{tabular}{l|r|r|r|r}
\multicolumn{1}{c|}{Census parameters} &
\multicolumn{1}{c|}{Triangulations} &
\multicolumn{1}{c|}{Old (h:m:s)} &
\multicolumn{1}{c|}{New (m:s)} &
\multicolumn{1}{c}{Speed-up} \\
\hline
$n=5$, orientable  &   4\,807 &     0:59 &     0:03 & $20\times$ \\
$n=5$, non-orientable &      377 &     1:09 &     0:06 & $12\times$ \\[1ex]
$n=6$, orientable  &  52\,946 &  1:11:57 &     1:03 & $69\times$ \\
$n=6$, non-orientable &   4\,807 &  1:23:30 &     2:05 & $40\times$ \\[1ex]
$n=7$, orientable  & 658\,474 & 92:23:39 &    22:16 & $249\times$ \\
$n=7$, non-orientable &  64\,291 &103:24:51 &    48:27 & $128\times$
\end{tabular}
\caption{Running times for old and new algorithms}
\label{tab-perf}
\end{table}

Table~\ref{tab-perf} shows the results of our time trials,
split into censuses of orientable and non-orientable triangulations.
For $n \leq 4$ both algorithms run in 1~second or less.
All trials were carried out on a single 2.93\,GHz Intel Xeon X5570 CPU.

The results are extremely pleasing:
for $n=7$ we see a speed-up of $128\times$ in the non-orientable case
and $249\times$ in the orientable case (from almost four days of
running time down to just 22~minutes).
Moreover, the speed-up factors appear to grow exponentially with $n$.
All of this suggests that our new algorithm can indeed make a concrete
difference as to how large a census we can feasibly build.

It is worth noting that speed-ups are consistently better for the
orientable case.  This may be because the union-find framework introduced in
\cite{burton07-nor10} is most effective for non-orientable enumeration
(as noted in Section~\ref{s-intro}), and so the orientable case has
more room for gain.  Nevertheless, it is pleasing to see that the
new algorithm gives substantial improvements for both the orientable and
non-orientable cases.

%
%

\section{Parallelisation} \label{s-par}

As $n$ increases, the output size for a typical census grows
exponentially in $n$, and sometimes super-exponentially---for instance,
the growth rate of a census of all 3-manifold triangulations is
known to be $\exp(\Theta(n\log n))$ \cite{burton10-pachner}.
It is therefore critical that enumeration algorithms be parallelised if
we are to make significant progress in obtaining new census data.

Like many combinatorial searches, the enumeration of
triangulations is an embarrassingly parallel problem: different branches
of the search tree can be processed independently, making the
problem well-suited for clusters and server farms.  Avoiding
isomorphisms causes some minor complications, which we discuss in the
full version of this paper.

The main obstacle is that, because of the various pruning techniques
(as described in Sections~\ref{s-prelim} and~\ref{s-links}), it is very
difficult to estimate in advance how long each branch of the search tree
will take to process.  Experience shows that there can be
orders-of-magnitude differences in running time between subsearches at
the same depth in the tree.

For this reason, parallelisation must use a controller\,/\,slave model
in which a controller process repeatedly hands small pieces of the search
space to the next available slave, as opposed to a simple subdivision in
which each process handles a fixed portion of the search space.
This means that some inter-process communication is required.

For each subsearch, the controller must send $O(n)$ data to the
slave: this includes the face pairing graph, the partial triangulation,
and the data associated with the vertex linking edges and triangles
as described in Section~\ref{s-links}.  The output for each
subsearch can be super-exponentially large, and so it is preferable for
slaves to write this data directly to disk (as opposed to communicating
it back to the controller).  Collating the output from different
slaves is a simple task that can be performed after the enumeration
has finished.

The enumeration code in {\regina} implements such a model using
MPI, and runs successfully on hundreds of simultaneous CPUs with
a roughly proportional speed-up in wall time.

%
%

\section{Census Data} \label{s-data}

The new algorithms in this paper have been implemented and run
in parallel using
high-performance computing facilities to obtain new census data
that exceeds the best known limits in the literature.  This includes
(i)~a census of all closed 3-manifold triangulations of size $n \leq 10$,
and (ii)~a census of all minimal triangulations of closed {\ppirr}
3-manifolds of size $n \leq 11$.

\subsection{All closed 3-manifold triangulations}

The first reported census of all closed 3-manifold
triangulations appears in \cite{burton10-complexity} for $n \leq 9$,
and has been used to study algorithmic complexity
and random triangulations \cite{burton10-complexity,burton10-pachner}.
Here we extend this census to $n \leq 10$ with a total of over
2~billion triangulations:

\begin{theorem}
    There are precisely $2\,196\,546\,921$
    closed 3-manifold triangulations that can be constructed from
    $\leq 10$ tetrahedra, as summarised by Table~\ref{tab-all}.
\end{theorem}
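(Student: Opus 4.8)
The plan is to prove this by exhaustive computational enumeration, using the algorithm of Section~\ref{s-prelim} augmented with Test~\ref{test-genus}. Since the statement is a count, the proof reduces to establishing that the enumeration is both \emph{complete} (it visits every closed 3-manifold triangulation up to isomorphism) and \emph{sound} (everything it reports is genuinely such a triangulation, counted exactly once), and then carrying out the computation and reading off the total.

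First I would run the enumeration separately for each $n \in \{1,\dots,10\}$ and sum the resulting counts, which are the per-size entries of Table~\ref{tab-all}. For a fixed $n$, step~1 enumerates all connected 4-valent multigraphs on $n$ nodes as candidate face pairing graphs; step~2 recursively assigns the $6^{2n}$ rotation and reflection choices, building up partial triangulations; and step~4 retains only those complete gluings satisfying conditions~(i) and~(ii) of the definition of a 3-manifold triangulation. Adding the ten per-$n$ totals is then claimed to give $2\,196\,546\,921$ (here, as is standard for such censuses, the triangulations counted are connected).

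The correctness of the pruning in step~3 is the crux of the argument. I would invoke the Lemma of Section~\ref{s-links} that justifies Test~\ref{test-genus}: whenever the test prunes a branch because some partial vertex link $L$ of $\tri$ fails to be a punctured sphere, that Lemma guarantees no way to glue the remaining faces can yield a 3-manifold triangulation, so no valid triangulation is ever discarded. Together with the orientability and edge-identification tests carried over from \cite{burton07-nor10}, this shows the pruned search still reaches every valid complete gluing, giving completeness. Soundness is immediate because step~4 independently re-verifies conditions~(i) and~(ii) on each complete selection, so the pruning tests only accelerate the search without altering the final reported set. Counting each triangulation exactly once, rather than once per isomorphic labelling, is handled by the ordered recursion of Section~\ref{s-prelim}, with the full justification deferred to the complete version of this paper.

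The main obstacle is the scale and reliability of the computation rather than any isolated mathematical step. The search tree for $n=10$ is far too large for a single machine, so the enumeration must be distributed using the controller\,/\,slave model of Section~\ref{s-par}, with each slave writing its output to disk and the results collated afterwards. To guard against both programming and hardware error, I would cross-check the run against the independently published counts for $n \leq 9$ from \cite{burton10-complexity}, confirm that the regenerated $n \leq 9$ subtotals agree \emph{exactly} before trusting the new $n=10$ figures, and verify internal consistency between the separately computed orientable and non-orientable subcounts. Establishing this chain of independent agreements is what ultimately licenses reporting the total as \emph{precise}.
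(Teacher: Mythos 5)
Your proposal matches the paper's approach: the theorem is established by exhaustive distributed computation using the enumeration algorithm of Sections~\ref{s-prelim} and~\ref{s-links}, whose correctness rests on exactly the points you identify (the pruning lemma guaranteeing no valid triangulation is discarded, final re-verification of the 3-manifold conditions, and isomorphism handling via the ordered recursion). The paper gives no further written proof beyond this, and your added cross-checks against the published $n \leq 9$ counts are consistent with the agreement of the first nine rows of Table~\ref{tab-all} with \cite{burton10-complexity}.
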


\begin{table}[htb]
\centering
\small
\begin{tabular}{c|r|r|r}
Size ($n$) & Orientable & Non-orientable & Total \\
\hline
1   &                4 &              --- &                4 \\
2   &               16 &                1 &               17 \\
3   &               76 &                5 &               81 \\
4   &              532 &               45 &              577 \\
5   &           4\,807 &              377 &           5\,184 \\
6   &          52\,946 &           4\,807 &          57\,753 \\
7   &         658\,474 &          64\,291 &         722\,765 \\
8   &      8\,802\,955 &         984\,554 &      9\,787\,509 \\
9   &    123\,603\,770 &     15\,499\,262 &    139\,103\,032 \\
10  & 1\,792\,348\,876 &    254\,521\,123 & 2\,046\,869\,999 \\
\hline
Total&1\,925\,472\,456 &    271\,074\,465 & 2\,196\,546\,921
\end{tabular}
\caption{All closed 3-manifold triangulations}
\label{tab-all}
\end{table}

The total CPU time required to enumerate the 10-tetra\-hedron
census was $\simeq 2.4$ years, divided amongst
192 distinct 2.93\,GHz Intel Xeon X5570 CPUs.

The paper \cite{burton10-complexity} makes two conjectures regarding
the worst-case and average number of vertex normal surfaces
for a closed 3-manifold triangulation of size $n$.  Details and
definitions can be found in \cite{burton10-complexity}; in summary,
these conjectures are:

\begin{conjecture} \label{conj-normal-worst}
    For all positive $n \neq 1,2,3,5$, a tight upper bound on the
    number of vertex normal surfaces in a closed 3-manifold
    triangulation of size $n$ is:
    \[ \begin{array}{l@{\quad}l}
        17^k + k & \mbox{if\ $n=4k$;} \\
        581 \cdot 17^{k-2}+k+1 & \mbox{if\ $n=4k+1$;} \\
        69 \cdot 17^{k-1}+k & \mbox{if\ $n=4k+2$;} \\
        141 \cdot 17^{k-1}+k+2 & \mbox{if\ $n=4k+3$,}
    \end{array} \]
    and so this upper bound grows asymptotically as $\Theta(17^{n/4})$.
\end{conjecture}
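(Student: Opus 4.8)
The plan is to split the conjecture into two tasks: an upper bound valid for \emph{every} closed 3-manifold triangulation of size $n$, together with a matching family of extremal triangulations that attains it (the word ``tight'' demands both). The stated exclusions $n \neq 1,2,3,5$, as well as the small non-excluded cases, I would confirm by direct computation over all triangulations of the relevant size, which are available from the census of Table~\ref{tab-all}. Throughout I would pass to Tollefson's quadrilateral coordinates, recording a normal surface by its $3n$ quadrilateral numbers subject to the quadrilateral matching equations and the quadrilateral constraints (at most one of the three quadrilateral types nonzero per tetrahedron). In this setting the vertex normal surfaces are precisely the extreme rays of the solution cone, and the conjecture becomes a bound on the number of such extreme rays.

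For the upper bound I would first stratify the solution cone by \emph{quadrilateral type selection}: for each of the at most $3^n$ ways of choosing which single quadrilateral type is permitted in each tetrahedron, the feasible set is an ordinary polyhedral cone cut out by the linear matching equations, and every vertex normal surface is an extreme ray of at least one such cone. The crude count $3^n$ dwarfs the target rate $\Theta(17^{n/4})$, which grows only like $2.03^n$, so the entire difficulty is to show that the matching equations collapse almost all of these strata---either by emptying them or by forcing their extreme rays to be shared across common faces. My proposed mechanism is a propagation argument along the $4$-valent dual graph: each face gluing imposes a linear relation among the quadrilateral coordinates on the two sides, so once an extreme ray is pinned down in one tetrahedron it continues, up to finitely many branchings, into each neighbour. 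The constant $17$ should then surface as the number of admissible local states that survive after four propagation steps, i.e.\ as the relevant power of the dominant eigenvalue of the associated transfer operator over a $4$-tetrahedron period.

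For tightness I would exhibit, for each residue of $n$ modulo $4$, an explicit family meeting the bound, using the shape of the formulas as the guide: a repeating block of four tetrahedra that multiplies the count by exactly $17$, capped by a small ``seed'' contributing the residue-dependent constants $581$, $69$, $141$ (for $n \equiv 1,2,3 \pmod 4$) and the constant $1$ for $n \equiv 0$. The additive $+k$, $+k+1$, $+k+2$ terms are naturally the $\Theta(n)$ vertex-linking surfaces, which are always vertex normal surfaces, so the extremal family should carry about $n/4$ vertices. For each family the matching equations are highly structured, and I would read off the extreme rays directly and confirm the stated totals by induction on the number of blocks.

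The main obstacle is the upper bound, and specifically the passage from the naive $3^n$ stratification to the sharp $17^{n/4}$ rate for \emph{arbitrary} triangulations. Unlike the extremal families, a general face pairing graph admits no path- or tree-like decomposition, so the propagation heuristic does not apply verbatim; what is needed is a genuinely global convexity statement showing that extreme rays cannot be generated independently in each stratum. I expect this to require a new structural theorem on how vertex normal surfaces distribute across quadrilateral type selections---plausibly a bound on the number of cells of the underlying hyperplane arrangement that actually support an extreme ray---and it is here that the real work, and the genuine possibility that the conjecture is hard, resides.
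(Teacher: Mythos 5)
This statement is a \emph{conjecture}, not a theorem of the paper: it is quoted from \cite{burton10-complexity}, and the present paper offers no proof of it. The only thing the paper establishes is the theorem immediately following it, namely that the conjecture holds for all $n \leq 10$, and that is done exactly the way you propose handling the small cases --- exhaustive computation of the vertex normal surface counts over the new census of Table~\ref{tab-all}. So the portion of your proposal that overlaps with what the paper actually does is the least ambitious portion; everything else is a research programme for a statement that remains open.

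Judged as a proof attempt, the proposal has a genuine gap that you yourself flag: the passage from the naive $3^n$ stratification by quadrilateral type selection to the claimed $\Theta(17^{n/4})$ rate is asserted to require ``a new structural theorem'' that you do not supply, and the transfer-operator heuristic behind the constant $17$ presupposes a path-like or periodic block structure that a general $4$-valent face pairing graph does not have. There is also a technical inconsistency in the setup: you pass to quadrilateral coordinates, where vertex normal surfaces are extreme rays of the quadrilateral solution cone, but then attribute the additive terms $k$, $k+1$, $k+2$ to vertex-linking surfaces --- yet vertex-linking surfaces have all quadrilateral coordinates zero and are not extreme rays of that cone at all. The counts in the conjecture are taken in standard ($7n$-dimensional) coordinates, and the sets of vertex surfaces in the two coordinate systems do not coincide, so the reduction changes the quantity being bounded. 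None of this is fatal to the conjecture, but it means the proposal neither proves it nor matches the paper, whose contribution here is purely the computational verification up to $n = 10$.
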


\begin{conjecture} \label{conj-normal-avg}
    If $\overline{\sigma}_n$ represents
    the average number of vertex normal surfaces amongst all closed
    3-manifold triangulations (up to isomorphism), then
    $\overline{\sigma}_n < \overline{\sigma}_{n-1} +
    \overline{\sigma}_{n-2}$
    for all $n \geq 3$, and so
    $\overline{\sigma}_n \in O([\frac{1+\sqrt{5}}{2}]^n)$.
\end{conjecture}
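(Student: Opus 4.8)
The plan is to attack Conjecture~\ref{conj-normal-avg} in two stages, since as stated it is an empirical assertion and a complete theoretical proof appears to lie well beyond current techniques. First I would \emph{verify} the inequality directly against the census data produced in this paper. Writing $T_n$ for the number of closed 3-manifold triangulations of size~$n$ up to isomorphism and $\Sigma_n$ for the total number of vertex normal surfaces summed over all of them, we have $\overline{\sigma}_n = \Sigma_n / T_n$. The conjecture originates in \cite{burton10-complexity} from data for $n \leq 9$; since we now possess the complete list of all such triangulations for $n \leq 10$, it is a finite (if heavy) computation to enumerate the vertex normal surfaces of each via the projective solution space, compute $\overline{\sigma}_{10}$ exactly, and check $\overline{\sigma}_{10} < \overline{\sigma}_9 + \overline{\sigma}_8$. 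This extends the range over which the conjecture is known to hold by one new data point.

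For the genuinely theoretical direction I would look for an inductive mechanism relating the vertex normal surfaces of a size-$n$ triangulation to those of smaller triangulations, the Fibonacci shape of the recurrence suggesting a decomposition into a size-$(n-1)$ and a size-$(n-2)$ contribution. The natural candidate is to remove one or two tetrahedra and track how the vertices of the solution cone behave, hoping to bound $\Sigma_n$ in terms of $\Sigma_{n-1}$ and $\Sigma_{n-2}$ and then divide through by the triangulation counts.

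The hard part will be that no such clean decomposition is available: deleting tetrahedra from a closed triangulation destroys closedness, the matching equations defining the solution cone change in an uncontrolled way, and a single vertex normal surface can split into many or vanish entirely under such an operation, so there is no obvious map along which to count. Worse, the statement concerns an \emph{average}, a ratio $\Sigma_n / T_n$ whose numerator and denominator both grow super-exponentially---indeed $T_n = \exp(\Theta(n \log n))$ by \cite{burton10-pachner}---while the claimed bound is only exponential in the golden ratio $\frac{1+\sqrt{5}}{2}$; controlling such a delicate quotient through an inductive inequality on $\Sigma_n$ alone seems unlikely to succeed. I therefore expect that, beyond the empirical verification, the most realistic progress would come from isolating a \emph{generic} class of triangulations on which vertex normal surface counts can be estimated and arguing that atypical triangulations are too rare to disturb the average---but making ``rare'' precise against a super-exponential background is exactly where the difficulty concentrates.
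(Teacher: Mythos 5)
This statement is left as a conjecture in the paper: the only thing actually established is the accompanying theorem that Conjectures~\ref{conj-normal-worst} and~\ref{conj-normal-avg} hold for $n \leq 10$, verified by direct computation over the new census --- which is precisely your first stage. Your recognition that a general proof is out of reach and that only empirical verification against the $n \leq 10$ data is feasible matches the paper's treatment exactly.
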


These conjectures were originally based on the census data for
$n \leq 9$.  With our new census we can now verify these
conjectures at the 10-tetrahedron level:

\begin{theorem}
    Conjectures~\ref{conj-normal-worst} and~\ref{conj-normal-avg}
    are true for all $n \leq 10$.
\end{theorem}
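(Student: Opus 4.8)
The plan is to establish the theorem by direct computational verification against the census data produced in this paper. Since both conjectures concern the number of vertex normal surfaces of closed 3-manifold triangulations of each fixed size, and since the cases $n \leq 9$ were already settled in \cite{burton10-complexity}, the essential new content is the verification at $n = 10$, for which we now possess the complete list of all $2\,046\,869\,999$ closed 3-manifold triangulations (up to isomorphism) from Table~\ref{tab-all}.

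First I would, for each triangulation $\tri$ in the census, enumerate its vertex normal surfaces---that is, the extreme rays of the normal surface solution cone---using the normal surface machinery in \regina. For Conjecture~\ref{conj-normal-worst} only the count $\sigma(\tri)$ of such surfaces is needed, so there is no need to store the surfaces themselves; it suffices to track, for each size $n$, the running maximum $\max_{|\tri| = n} \sigma(\tri)$. For Conjecture~\ref{conj-normal-avg} I would similarly accumulate the sum $\sum_{|\tri| = n} \sigma(\tri)$ together with the number of triangulations, from which $\overline{\sigma}_n$ follows by a single division.

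With these aggregate statistics in hand, the verification itself is routine. For Conjecture~\ref{conj-normal-worst} I would check, for each relevant $n \in \{4,6,7,8,9,10\}$ (the values $\leq 10$ with $n \neq 1,2,3,5$), that the computed maximum equals the value predicted by the appropriate case of the formula; for $n = 10$, which is of the form $4k+2$ with $k=2$, this is $69 \cdot 17 + 2 = 1175$. Matching the maximum exactly confirms simultaneously that the formula is an upper bound and that it is tight, i.e.\ actually attained. For Conjecture~\ref{conj-normal-avg} I would compute $\overline{\sigma}_{10}$ and verify the Fibonacci-type inequality $\overline{\sigma}_{10} < \overline{\sigma}_9 + \overline{\sigma}_8$, the earlier inequalities and the averages $\overline{\sigma}_8, \overline{\sigma}_9$ being inherited from \cite{burton10-complexity}.

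The hard part will be the sheer computational scale rather than any conceptual difficulty: enumerating vertex normal surfaces is itself an expensive vertex-enumeration procedure, and it must be applied to over two billion triangulations. I would therefore distribute the computation across the same high-performance facilities used to build the census, partitioning the triangulation list across many CPUs and reducing only the per-size maxima and sums at the end. Care must be taken that the normal surface enumeration is carried out in the coordinate system matching the definition of vertex normal surfaces in \cite{burton10-complexity}, and that the parallel reduction correctly combines partial maxima and partial sums across slaves; beyond these bookkeeping points, no new mathematics is required.
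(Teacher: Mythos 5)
Your proposal matches the paper's (implicit) approach exactly: the theorem is a computational verification obtained by enumerating the vertex normal surfaces of every triangulation in the new $n\leq 10$ census, tracking the per-size maxima and averages, and comparing them against the conjectured bound and the Fibonacci-type inequality; the paper offers no further argument beyond this. Your arithmetic for the $n=10$ case ($69\cdot 17^{k-1}+k$ with $k=2$) and your list of relevant $n$ values are both consistent with the conjectures as stated.
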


This census contains over 63\,GB of data, and so the data files have not
been posted online.  Readers who wish to work with
this data are welcome to contact the author for a copy.

\subsection{Closed {\ppirr} 3-manifolds}

A 3-manifold is \emph{\ppirr} if every sphere bounds a ball and
there are no embedded two-sided projective planes.
Censuses of closed {\ppirr}
3-manifolds and their minimal triangulations
have a long history
\cite{amendola03-nor6,amendola05-nor7,burton07-nor10,burton07-nor8,
martelli06-or10,martelli01-or9,matveev05-or11,
matveev05-or12,matveev88-hamiltonian}.
The largest reported census of all minimal \emph{triangulations}
of these manifolds reaches $n\leq 10$ \cite{burton07-nor10}.  If we enumerate
\emph{manifolds} but not their triangulations, the censuses reaches
$n\leq 12$ in the orientable case \cite{matveev05-or12} but remains at
$n\leq 10$ in the non-orientable case.

Here we extend this census of minimal triangulations of closed
{\ppirr} 3-manifolds to $n\leq 11$.
As a result, we also extend the census of underlying
manifolds to $n\leq 11$ in the non-orientable case, and in the
orientable case we confirm that the number of manifolds matches
Matveev's census \cite{matveev05-or12}.

\begin{theorem}
    There are precisely $13\,765$
    closed {\ppirr} 3-manifolds that can be constructed from
    $\leq 11$ tetrahedra.  These have a combined total of
    $55\,488$ minimal triangulations,
    as summarised by Table~\ref{tab-minimal}.
\end{theorem}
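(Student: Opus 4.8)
The plan is to establish this count by an exhaustive computation, using the enumeration algorithm of Section~\ref{s-prelim} enhanced with Test~\ref{test-genus} as the engine. First I would run the full enumeration for every $n \le 11$, generating all closed 3-manifold triangulations of each size up to isomorphism; the soundness of the pruning guarantees (via the lemma following Test~\ref{test-genus}) that no valid triangulation is lost, while step~4 of the algorithm re-checks conditions (i) and (ii) on each completed triangulation so that no combinatorial triangulation failing to be a 3-manifold triangulation survives. This reduces the theorem to a finite, if enormous, search, and the role of Section~\ref{s-links} is precisely to make that search feasible at $n = 11$.

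The second stage is to post-process the raw triangulation census into the desired list of minimal triangulations of closed \ppirr{} 3-manifolds. For each triangulation I would first discard those whose underlying manifold is not \ppirr{}---that is, those admitting an essential sphere or a two-sided embedded projective plane---retaining only the \ppirr{} manifolds. Next I would partition the surviving triangulations into homeomorphism classes, using computable invariants (orientability, first homology, and quantum invariants such as Turaev--Viro, with explicit recognition where these coincide) to decide when two triangulations present the same manifold. Minimality is then read off from the census itself: since we possess the complete list for every size $\le 11$, a triangulation on $n$ tetrahedra is minimal precisely when its manifold does not occur among the triangulations on fewer than $n$ tetrahedra. Counting the resulting homeomorphism classes gives the manifold total $13\,765$, and counting the minimal triangulations within them gives $55\,488$, as tabulated in Table~\ref{tab-minimal}.

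The hard part will be the manifold-recognition stage rather than the enumeration. Distinguishing non-homeomorphic 3-manifolds, and certifying that two triangulations \emph{are} homeomorphic, is algorithmically delicate: homology and quantum invariants separate most classes cheaply, but coincidences among these invariants force a more expensive case analysis, and certifying \ppirr{}ness requires detecting essential spheres and projective planes through normal surface theory. A secondary obstacle is confidence in the computation at this scale. Here I would rely on two independent checks: the internal consistency of the minimality determination across sizes, and---in the orientable case---agreement of the manifold counts with Matveev's independently obtained census for $n \le 12$ \cite{matveev05-or12}, which provides strong external validation of the orientable totals at $n \le 11$.
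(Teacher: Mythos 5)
The paper offers no written proof of this theorem beyond the computation itself, so the comparison here is between your proposed computation and the one the paper actually performed. Your overall pipeline --- exhaustive enumeration, rejection of non-\ppirr{} manifolds via normal surface theory, partition into homeomorphism classes by invariants plus explicit recognition, and reading off minimality as ``smallest size at which the manifold appears'' --- is the right shape, and your appeal to Matveev's independent orientable census as external validation is exactly the cross-check the paper itself cites.

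The concrete gap is your first stage. You propose to generate \emph{all} closed 3-manifold triangulations for every $n \le 11$ and then filter. But the paper's census of all triangulations stops at $n \le 10$ (already $\sim 2 \times 10^9$ triangulations, $63$\,GB, and $2.4$ CPU-years on 192 processors), and the count grows as $\exp(\Theta(n\log n))$; the paper conspicuously does \emph{not} extend that census to $n=11$ even while it does extend the minimal \ppirr{} census to $n=11$. The minimal census is obtained from a different, far smaller search: Section~\ref{s-intro} and the cited prior work prune using conditions that are necessary only for \emph{minimal} triangulations (absence of low-degree edges, forbidden subgraphs in the 4-valent face pairing graph), so the enumeration produces only a modest superset of candidate minimal triangulations at each size, followed by the ``significant manual post-processing'' the paper mentions. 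Your minimality logic still works on such a superset --- a genuinely minimal triangulation satisfies every necessary condition and so survives the pruning, hence the smallest $n$ at which a manifold appears is its minimal size and all its size-$n$ representatives there are minimal --- but without invoking these minimality-specific pruning conditions your $n=11$ enumeration is a different and substantially more expensive computation than the one that actually establishes the theorem, and you give no argument that it can be completed. For a theorem whose proof \emph{is} the computation, that feasibility argument is the missing step.
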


\begin{table}[htb]
\centering
\small
\begin{tabular}{c|r|r|r|r}
Size & \multicolumn{2}{c|}{Minimal triangulations} &
\multicolumn{2}{c}{Distinct 3-manifolds} \\
($n$) & Orientable & Non-orient. & Orientable & Non-orient. \\
\hline
 1 &       4 &    --- &       3 & --- \\
 2 &       9 &    --- &       6 & --- \\
 3 &       7 &    --- &       7 & --- \\
 4 &      15 &    --- &      14 & --- \\
 5 &      40 &    --- &      31 & --- \\
 6 &     115 &     24 &      74 &   5 \\
 7 &     309 &     17 &     175 &   3 \\
 8 &     945 &     59 &     436 &  10 \\
 9 &  3\,031 &    307 &  1\,154 &  33 \\
10 & 10\,244 &    983 &  3\,078 &  85 \\
11 & 36\,097 & 3\,282 &  8\,421 & 230 \\
\hline
Total & 50\,816 & 4\,672 & 13\,399 & 366
\end{tabular}
\caption{All minimal triangulations of closed {\ppirr} 3-manifolds}
\label{tab-minimal}
\end{table}

The paper \cite{burton07-nor8} raises conjectures for certain classes of
non-orientable 3-manifolds regarding the combinatorial structure of every
minimal triangulation.
Again we refer to the source \cite{burton07-nor8}
for details and definitions; in summary:

\begin{conjecture} \label{conj-min-torus}
    Every minimal triangulation of a non-flat non-orientable torus
    bundle over the circle is a \emph{layered torus bundle}.
\end{conjecture}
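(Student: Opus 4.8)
The plan is to combine the geometric classification of these manifolds with normal surface theory and a combinatorial peeling argument. First I would record the geometric picture: a non-orientable torus bundle has orientation-reversing monodromy, so its monodromy matrix $A \in GL(2,\mathbb{Z})$ satisfies $\det A = -1$, and the non-flat hypothesis then forces $\mathrm{tr}(A) \neq 0$, so that $A$ is Anosov and $M$ carries Sol geometry. In particular the fibre is, up to isotopy, the unique incompressible torus $T$ in $M$, and the layered-torus-bundle construction of \cite{burton07-nor8} realises $M$ by layering tetrahedra according to a word in the standard generators of $GL(2,\mathbb{Z})$ expressing $A$, together with a single orientation-reversing flip. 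The overall strategy is to prove a tight lower bound on the number of tetrahedra and then to show that any triangulation meeting this bound must be layered.

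For the lower bound I would use normal surface theory to locate the fibre $T$ as a normal surface inside an arbitrary minimal triangulation $\tri$, exploiting that minimal triangulations of irreducible manifolds are $0$-efficient and that incompressible surfaces can then be isotoped into normal form. Cutting $\tri$ along a normal copy of $T$ yields a cell decomposition of $T^2 \times I$ whose complexity is controlled by the word length required to express $A$. Since the layered construction already produces a triangulation of $M$, this step reduces the problem to establishing a matching lower bound, and in particular to showing that the layered torus bundle realises the minimal tetrahedron count.

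The hard part is rigidity: showing that \emph{every} minimal triangulation is layered, not merely that one such triangulation is minimal. Here I would argue by induction on $n$, peeling off layers. Using that a minimal triangulation of such a manifold has a single vertex and no edges of degree $1$ or $2$, and that its $4$-valent face-pairing graph is tightly constrained, I would locate a tetrahedron whose local gluing pattern matches the ``last layered'' tetrahedron of the layered construction --- detected by a low-degree edge together with the characteristic two-face gluing of a layering move --- and remove it. This reduces $\tri$ to a minimal triangulation of a smaller Sol torus bundle (or ultimately of $T^2 \times I$); the inductive hypothesis then forces the remainder to be layered, and reattaching the peeled tetrahedron exhibits $\tri$ itself as a layered torus bundle.

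The main obstacle is precisely this peeling step. There is no \emph{a priori} reason that a minimal triangulation must contain a removable layering tetrahedron, and excluding exotic non-layered minimal triangulations is exactly what keeps the statement a conjecture rather than a theorem. I expect a successful argument to combine a finite census base case --- the smallest Sol bundles, which are visible in the enumeration data of this paper and of \cite{burton07-nor8} --- with a normal-surface argument forcing the essential torus $T$ to sit in ``layered position'' relative to $\tri$. Controlling the interaction between the combinatorics of $\tri$ and the isotopy class of $T$, especially across the orientation-reversing monodromy identification, is where I expect the real difficulty to lie.
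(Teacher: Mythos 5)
There is a fundamental mismatch here: the statement you are asked to prove is labelled a \emph{conjecture} in the paper, and the paper contains no proof of it. The conjecture originates in the earlier work \cite{burton07-nor8}, and the only contribution of the present paper is computational: using the new census of minimal triangulations up to $n \leq 11$, the author verifies that no counterexample exists at that size. That verification is the entirety of the ``proof'' the paper offers, and it is a finite check against enumeration data, not a mathematical argument of the kind you sketch.

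Your proposal is a reasonable research outline, and to your credit you identify exactly where it breaks down: the ``rigidity'' or ``peeling'' step. Nothing in your sketch (nor in the literature the paper cites) supplies an argument that an arbitrary minimal triangulation of a Sol torus bundle must contain a tetrahedron in ``layering position'' that can be removed to leave a minimal triangulation of a smaller bundle, nor that the induction is even well-founded (the intermediate objects are triangulations of $T^2 \times I$ with boundary, not smaller closed Sol bundles, so the inductive hypothesis as stated does not apply to them). Likewise, isotoping the incompressible fibre torus into normal form gives no control over how it intersects the tetrahedra, so the claim that cutting along it yields a complexity bound ``controlled by the word length of $A$'' is an assertion, not a deduction. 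Since these are precisely the open steps, your proposal does not constitute a proof, and it should not be presented as one; the honest statement --- which both you and the paper make --- is that the conjecture is verified only for $n \leq 11$ by exhaustive enumeration.
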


\begin{conjecture} \label{conj-min-plugged}
    Every minimal triangulation of a non-flat non-orientable
    Seifert fibred space over $\R P^2$ or $\bar{D}$ with
    two exceptional fibres is either a \emph{plugged thin $I$-bundle} or a
    \emph{plugged thick $I$-bundle}.
\end{conjecture}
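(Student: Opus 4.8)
Since this is stated as a conjecture, I outline a plan of attack rather than a complete argument. The statement really has two halves: that the \emph{plugged thin $I$-bundle} and \emph{plugged thick $I$-bundle} triangulations genuinely are minimal for the manifolds in question, and --- the substantive claim --- that no other minimal triangulation exists. The first is a concrete construction-and-counting task; the second is a combinatorial rigidity statement, and is where the real difficulty lies.

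First I would pin down the manifolds and their complexities. The non-flat non-orientable Seifert fibred spaces over $\R P^2$ or $\bar D$ with exactly two exceptional fibres form an explicit family indexed by their Seifert invariants; for each member I would exhibit a plugged $I$-bundle triangulation and count its tetrahedra, yielding an upper bound on the Matveev complexity. The delicate step is sharpness: showing that these plugged $I$-bundles are \emph{minimal}. Lower bounds on complexity can be sought from the first homology, from the structure of the Seifert fibration, and from counts of essential vertex normal surfaces, while the census data of Section~\ref{s-data} (Table~\ref{tab-minimal}) confirms sharpness for all small $n$ directly.

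The heart of the matter is uniqueness. Starting from an arbitrary minimal triangulation $\tri$ of such a manifold, I would first invoke the standard structural restrictions that minimal triangulations satisfy --- the absence of low-degree edges and of the forbidden local configurations that force non-minimality. I would then locate, as a vertex normal surface of small weight in $\tri$, the canonical essential surface reflecting the $I$-bundle structure (a two-sided torus or Klein bottle bounding the $I$-bundle core), and cut $\tri$ along it. The aim is to show that this decomposition separates $\tri$ into an $I$-bundle piece, triangulated as a thin or thick $I$-bundle, together with the two plugs --- layered solid tori --- that realise the exceptional fibres. Matching the combinatorics of each piece against the definitions in \cite{burton07-nor8} would then identify $\tri$ as a plugged $I$-bundle.

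The main obstacle, as with all such rigidity results, is \emph{completeness}: ruling out exotic minimal triangulations that do not fit this cut-and-decompose pattern. Concretely, one must show that the canonical surface is always carried by a vertex normal surface, that cutting along it respects minimality, and that the two boundary pieces admit no competing minimal fillings. I expect this to demand an induction on $n$ combined with a finite but intricate case analysis of the admissible plugs, using Table~\ref{tab-minimal} both to anchor the base cases and to eliminate spurious branches. A successful completion would likely follow the template of the layered-solid-torus uniqueness arguments of \cite{burton07-nor10}, extended to accommodate the non-orientable base and the two exceptional fibres simultaneously.
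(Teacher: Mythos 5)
This statement is a \emph{conjecture}, not a theorem: the paper does not prove it, and neither do you. The conjecture originates in \cite{burton07-nor8}, and the only thing the present paper establishes about it is the computational fact that it holds for all minimal triangulations of size $n \leq 11$ (the theorem at the end of Section~\ref{s-data}, obtained by checking every entry of the new census against the known combinatorial families). There is no cut-and-decompose argument, no normal surface analysis, and no induction anywhere in the paper; the ``proof'' you should be comparing against is an exhaustive machine search over Table~\ref{tab-minimal}.

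Your proposal is a reasonable sketch of how one might eventually attack the conjecture, and you correctly isolate the hard part (completeness: ruling out exotic minimal triangulations not arising from the plugged $I$-bundle construction). But as written it is a research programme, not a proof, and every load-bearing step is asserted rather than established: that the canonical torus or Klein bottle is realised as a vertex normal surface of small weight in an \emph{arbitrary} minimal triangulation, that cutting along it preserves minimality of the pieces, and that the resulting pieces must match the thin/thick $I$-bundle and layered-solid-torus combinatorics. These are exactly the open questions, and no known technique closes them --- which is why the statement remains a conjecture in this paper and in \cite{burton07-nor8}. If your intent was to reproduce what the paper actually does, the honest answer is: state the conjecture, and verify it for $n \leq 11$ by enumerating all minimal triangulations of the relevant Seifert fibred spaces and checking each against the definitions of plugged thin and thick $I$-bundles.
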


Layered torus bundles and plugged thin and thick $I$-bun\-dles
are families of triangulations with well-defined
combinatorial structures.
The original conjectures were based on census data for $n \leq 8$, and
in \cite{burton07-nor10} they are shown to hold for all $n \leq 10$.
With our new census data we are now able to validate
these conjectures at the 11-tetrahedron level:

\begin{theorem}
    Conjectures~\ref{conj-min-torus} and~\ref{conj-min-plugged}
    are true for all minimal triangulations of size $n \leq 11$.
\end{theorem}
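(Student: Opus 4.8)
The plan is to reduce each conjecture to a finite, mechanically checkable statement using the complete enumeration of minimal triangulations summarised in Table~\ref{tab-minimal}. Both conjectures are universally quantified over the minimal triangulations of certain families of 3-manifolds, and since the census lists \emph{every} minimal triangulation of a closed {\ppirr} 3-manifold with $n \leq 11$ tetrahedra (up to isomorphism), it suffices to inspect each triangulation in this finite list and confirm that whenever its underlying manifold lies in the relevant family, the triangulation has the prescribed combinatorial form.

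First I would determine the underlying 3-manifold $M(\tri)$ for every minimal triangulation $\tri$ in the census. This can be done with the combinatorial recognition routines in {\regina}, supplemented where necessary by computing invariants (homology, and where recognition is ambiguous, normal-surface and Seifert-fibred-space data) to separate manifolds that share coarse invariants. I would then partition the triangulations according to $M(\tri)$, so that for each manifold $M$ appearing in the census we have the complete list of its minimal triangulations.

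Next I would isolate the families named in the two conjectures. For Conjecture~\ref{conj-min-torus} I would select those $M$ that are non-orientable torus bundles over the circle, discarding the finitely many flat cases; for Conjecture~\ref{conj-min-plugged} I would select the non-flat non-orientable Seifert fibred spaces over $\R P^2$ or $\bar{D}$ with exactly two exceptional fibres. For each triangulation of a selected manifold I would then run a combinatorial test verifying membership in the target family: since layered torus bundles and plugged thin and thick $I$-bundles are defined by explicit combinatorial patterns, checking whether a given $\tri$ is built according to one of these patterns is a bounded combinatorial search over the labellings of its tetrahedra. The theorem follows once every selected triangulation passes its test.

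The main obstacle I expect is not the combinatorial pattern-matching but the reliable topological recognition in the first step: the argument is valid only if \emph{no} triangulation of a relevant manifold is misclassified and thereby skipped, since a false negative in the manifold-recognition step would silently exclude a potential counterexample. Guarding against this requires recognition that is provably complete on the manifolds occurring in the census---for instance, cross-checking the orientable count against Matveev's independently computed census \cite{matveev05-or12}, and confirming that the set of manifolds realised by the enumerated layered torus bundles and plugged $I$-bundles accounts for every census triangulation of those manifolds, so that the verification cannot miss an alternative minimal triangulation.
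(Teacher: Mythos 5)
Your proposal matches the paper's approach: the theorem is established purely by computational verification against the complete census of minimal triangulations for $n \leq 11$, identifying the underlying manifolds and checking each relevant triangulation against the combinatorial definitions of layered torus bundles and plugged $I$-bundles (the paper gives no further detail beyond citing the census data and the machinery of \cite{burton07-nor8,burton07-nor10}). Your additional caution about the completeness of manifold recognition and the cross-check against Matveev's orientable census is sensible and consistent with what the paper reports doing.
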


Data files for this census, including the 3-manifolds and all of
their minimal triangulations, can be downloaded from the {\regina} website
\cite{regina}.

%
%

\section*{Acknowledgments}

Computational resources used in this work
were provided by the Queensland Cyber Infrastructure Foundation
and the Victorian Partnership for Advanced Computing.

%
%

\small
\bibliographystyle{amsplain}
\bibliography{pure}

%
%

\bigskip
\noindent
Benjamin A.~Burton \\
School of Mathematics and Physics, The University of Queensland \\
Brisbane QLD 4072, Australia \\
(bab@maths.uq.edu.au)

%
%

\normalsize
\appendix

\section*{Appendix}

In the introduction we claim there are at least $6.44 \times 10^{12}$
connected combinatorial triangulations of size $n=9$, up to isomorphism.
Here we give the arguments to support this claim.

We begin by placing a lower bound on the number of \emph{labelled}
connected combinatorial triangulations.
To ensure that each triangulation is connected, we insist that the first face
of tetrahedron~$k$ is glued to some face chosen from tetrahedra
$1,\ldots,k-1$, for all $k > 1$.  Of course there are many labelled connected
triangulations that do not satisfy this constraint, but since we are
computing a lower bound this does not matter.

We initially choose gluings for the first face of each tetrahedron
$2,3,\ldots,n$ in order.  For the first face of tetrahedron $k$ there are
$2k$ choices for a partner face---these are the $4(k-1)$ faces of tetrahedra
$1,\ldots,k-1$ minus the $2(k-2)$ faces already glued---as well as
six choices of rotation or reflection.
This gives a total of
$4 \times 6 \times \ldots \times (2n-2) \times 2n \times 6^{n-1}$
possibilities.  From here there are
$(2n+1) \times (2n-1) \times \ldots \times 3 \times 1 \times 6^{n+1}$
ways of gluing together the remaining $2n+2$ faces in pairs,
giving a lower bound of at least
$(2n+1)! \times 6^{2n}/2$ labelled connected combinatorial triangulations
of size $n$.

We finish by factoring out isomorphisms.  Each isomorphism class has
size at most $n! \times {4!}^n$ (all possible relabellings of tetrahedra and
their vertices), and so the total number of connected
combinatorial triangulations of size $n$ \emph{up to isomorphism} is at least
\[ \frac{(2n+1)! \times 6^{2n}}{2 \times n! \times {4!}^n} .\]
For $n=9$ this evaluates to approximately
$6.4435 \times 10^{12}$.

\end{document}